\documentclass[twoside,a4paper,11pt]{amsart}
\setlength{\textwidth}{16cm} \setlength{\textheight}{22.5cm}
\topmargin -1cm \oddsidemargin -.2cm \evensidemargin -.2cm
\usepackage{amsmath,amssymb,amsthm,amscd}
\usepackage[]{latexsym,amssymb,amsmath,amsfonts, amsthm, verbatim}
\usepackage[all,cmtip,ps]{xy}
\usepackage[latin1]{inputenc}
\usepackage[english]{babel}

\usepackage{graphicx}


\newcommand{\DMod}{\mathcal{D}(\text{Mod})}
\newcommand{\dbmod}{\mathcal{D}^b(\text{mod})}

\newcommand{\ma}{\mbox{\rm mod-$A$}}
\newcommand{\mb}{\mbox{\rm mod-$B$}}
\newcommand{\mc}{\mbox{\rm mod-$C$}}
\newcommand{\mai}{\mbox{\rm mod-$A_i$}}
\newcommand{\mbi}{\mbox{\rm mod-$B_i$}}
\newcommand{\mci}{\mbox{\rm mod-$C_i$}}

\newcommand{\dba}{\mathcal{D}^b(\ma)}
\newcommand{\dbb}{\mathcal{D}^b(\mb)}
\newcommand{\dbc}{\mathcal{D}^b(\mc)}
\newcommand{\dbai}{\mathcal{D}^b(\mai)}
\newcommand{\dbbi}{\mathcal{D}^b(\mbi)}
\newcommand{\dbci}{\mathcal{D}^b(\mci)}

\newcommand{\MA}{\mbox{\rm Mod-$A$}}
\newcommand{\MB}{\mbox{\rm Mod-$B$}}
\newcommand{\MC}{\mbox{\rm Mod-$C$}}
\newcommand{\dua}{\mathcal{D}(\MA)}
\newcommand{\dub}{\mathcal{D}(\MB)}
\newcommand{\duc}{\mathcal{D}(\MC)}

\newcommand{\Z}{\mathbb{Z}}

\DeclareMathOperator{\Hom}{Hom}
\DeclareMathOperator{\End}{End}
\DeclareMathOperator{\Ext}{Ext}









\newcommand{\Dcal}{\ensuremath{\mathcal{D}}}

\theoremstyle{plain}
\newtheorem{thm}{Theorem}[section]
\newtheorem{prop}[thm]{Proposition}

\newtheorem{lemma}[thm]{Lemma}
\newtheorem{cor}[thm]{Corollary}

\newtheorem*{ex*}{Example}

\theoremstyle{definition}

\theoremstyle{remark}
\newtheorem*{rem}{Remark}

\newcommand{\ra}{\rightarrow}

\newcommand{\ten}{\otimes}

\begin{document}

\title{Blocks of group algebras are derived simple}
\author{Qunhua Liu}
\address{Qunhua Liu, Institute for algebra and number theory, University of Stuttgart, Pfaffenwaldring 57, D-70569 Stuttgart, Germany}
\email{qliu@mathematik.uni-stuttgart.de}
\author{Dong Yang}
\address{Dong Yang, Hausdorff Research Institute for Mathematics, Poppelsdorfer Allee 82, D-53115 Bonn, Germany}
\email{dongyang2002@gmail.com}
\date{\today}


\begin{abstract}
A derived version of Maschke's theorem for finite groups is proved:
the derived categories, bounded or unbounded, of all blocks of the
group algebra of a finite group are simple, in the sense that they
admit no nontrivial recollements. This result is independent
of the characteristic of the base field. \smallskip \\
{\bf MSC 2010 classification:} 16E35, 20C05, 16G30.\\
{\bf Key words:} group algebras; symmetric algebras; derived
simpleness.
\end{abstract}

\maketitle

\section{Introduction}

Let $G$ be any finite group and $k$ a field. An indecomposable
algebra direct summand $B$ of the group algebra $kG$ is called a
{\em block}. By Maschke's theorem, all blocks of $kG$ are simple
algebras if and only if the characteristic of $k$ does not divide
the order of $G$. The main result of this article is a general
statement about blocks that does not need any assumption on the
characteristic of $k$:

\medskip
\noindent{\bf Main Theorem.} {\em All blocks $B$ of $kG$ are derived
simple. More precisely, the bounded derived category $\dbb$ of
finitely generated $B$-modules, as well as the unbounded derived
category $\dub$ of all $B$-modules, are simple in the sense that
they do not admit nontrivial recollements by derived  categories of
the same type.}
\medskip

Simple algebras are, of course, derived simple in this sense. Thus,
when the characteristic does not divide the group order the
statement is an immediate corollary of Maschke's theorem.

The context of the main theorem is the following: Recollements of
triangulated categories, defined by Beilinson, Bernstein and Deligne
\cite{BeilinsonBernsteinDeligne82}, can be seen as analogues of
short exact sequences of these categories. We focus on (bounded or
unbounded) derived categories of finite-dimensional algebras. A
derived category is said to be \emph{simple} if it is nonzero and it is not the
middle term of a nontrivial recollement of derived categories. Once
simpleness has been defined, one can study \emph{stratifications},
i.e. ways of breaking up a given derived category into simple pieces using recollements. They are analogues of composition series for groups/modules.
 Then the question arises which objects
are simple and if a Jordan--H\"older theorem holds true, that is,
whether finite stratifications exist and are unique. Various
positive and negative results recently have been found, see
\cite{AKL2, AKL3, CX1, CX2}. The main theorem provides a large and
quite natural supply of derived simple algebras as well as a derived
Jordan--H\"older theorem for group algebras:

\medskip
\noindent{\bf Corollary.} \emph{Let $G$ be a finite group and $kG$
the group algebra. Then any stratification of
$\mathcal{D}(\mathrm{Mod}$-$kG)$ (respectively,
$\mathcal{D}^b(\mathrm{mod}$-$kG)$) is finite. Moreover, the simple
factors of any two stratifications are the same: they are precisely
the derived categories of the blocks of $kG$.}
\medskip

More generally we will prove derived simpleness for larger classes
of algebras. In the case of the category $\dbmod$ we will show that
all indecomposable symmetric algebras are derived simple. In the
more difficult case of $\DMod$ we will show that indecomposable
symmetric algebras with `enough' cohomology (in a sense made precise
below) are derived simple. Blocks of group algebras as well as indecomposable
symmetric algebras of finite representation type do satisfy this
condition.

The second-named author gratefully acknowledges support from
Max-Planck-Institut f\"ur Mathematik in Bonn and from Hausdorff
Research Institute for Mathematics.
Both authors are deeply grateful to Steffen Koenig for many helpful discussions and suggestions.



\section{Recollements and derived simpleness}

Let $k$ be a field. For a finite-dimensional $k$-algebra $A$, we
denote by $\dua$ the derived category of (right) $A$-modules, by
$\dba$ the bounded derived category of finitely generated
$A$-modules, and by $K^b(P_A)$ the homotopy category of bounded
complexes of finitely generated projective $A$-modules. Objects in
$K^b(P_A)$ will be called {\em compact} complexes. We often view
$K^b(P_A)$ as a triangulated full subcategory of the other two
categories, and view $\dba$ as the triangulated full subcategory of
$\dua$ consisting of complexes whose total cohomology space is
finite-dimensional. By abuse of notation we write $\Hom_A(-,-)$ for
both $\Hom_{\mathcal{D}^b(\mathrm{mod}-A)}(-,-)$ and $\Hom_{\mathcal{D}(\mathrm{Mod}{\scriptstyle -}A)}(-,-)$.

\medskip

A \emph{recollement}~\cite{BeilinsonBernsteinDeligne82} of
triangulated categories is a diagram of triangulated categories and
triangle functors
$$\xymatrix@!=3pc{\mathcal{C}' \ar[r]|{i_*=i_!} &\mathcal{C} \ar@<+2.5ex>[l]|{i^!}
\ar@<-2.5ex>[l]|{i^*} \ar[r]|{j^!=j^*} &
\mathcal{C}''\ar@<+2.5ex>[l]|{j_*} \ar@<-2.5ex>[l]|{j_!}}$$ such
that
\begin{enumerate}
\item $(i^\ast,i_\ast)$,\,$(i_!,i^!)$,\,$(j_!,j^!)$ ,\,$(j^\ast,j_\ast)$
are adjoint pairs;

\item  $i_\ast,\,j_\ast,\,j_!$  are full embeddings;

\item  $i^!\circ j_\ast=0$ (and thus also $j^!\circ i_!=0$ and
$i^\ast\circ j_!=0$);

\item  for each $C\in \mathcal{C}$ there are triangles
\begin{eqnarray*}i_! i^!(C)\to C\to j_\ast j^\ast (C)\to i_!i^!(C)[1],\\
j_! j^! (C)\to C\to i_\ast i^\ast(C)\to j_!j^!(C)[1].
\end{eqnarray*}
\end{enumerate}

We are particularly interested in recollements of the following two
forms
$$\xymatrix@!=6.5pc{\dbb \ar[r]|{i_*=i_!} &\dba \ar@<+2.5ex>[l]|{i^!}
\ar@<-2.5ex>[l]|{i^*} \ar[r]|{j^!=j^*} & \dbc\ar@<+2.5ex>[l]|{j_*}
\ar@<-2.5ex>[l]|{j_!} } \ \ \ \ \ \ \ \text{(R1)}$$ and
$$\xymatrix@!=6.5pc{\dub \ar[r]|{i_*=i_!} &\dua \ar@<+2.5ex>[l]|{i^!}
\ar@<-2.5ex>[l]|{i^*} \ar[r]|{j^!=j^*} & \duc\ar@<+2.5ex>[l]|{j_*}
\ar@<-2.5ex>[l]|{j_!} }\ \ \ \ \ \ \ \ \text{(R2)}$$ with $A$, $B$
and $C$ being finite-dimensional algebras over $k$. By
\cite[Corollary 2.3]{AKL3} and \cite[5.2.9]{NZ}, a recollement of
the form (R1) always implies the existence of a recollement of the
form (R2).

\begin{lemma}\label{l:image-from-left}
Let $A$ be a finite-dimensional $k$-algebra admitting a recollement
of the form (R2). Then $j_!j^!(A)$, $i_*i^*(A)$ and $i_*(B)$ all
belong to $\dba$.
\end{lemma}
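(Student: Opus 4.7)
The plan is to analyse the recollement triangle
\[
j_!j^!(A) \to A \to i_*i^*(A) \to j_!j^!(A)[1]
\]
obtained by applying axiom (4) to $A \in \dua$. Since $A \in \dba$ and $\dba$ is a thick subcategory of $\dua$, once I show $i_*i^*(A) \in \dba$, the triangle immediately yields $j_!j^!(A) \in \dba$ as well. I will reduce $i_*i^*(A) \in \dba$ to the statement $i_*(B) \in \dba$, leaving a single cohomology calculation to handle.

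The key observation is that $i^*(A)$ is a compact object of $\dub$. Indeed, $i^*$ is left adjoint to $i_*$, and since $i_* = i_!$ is itself a left adjoint (to $i^!$), it preserves all coproducts; consequently $i^*$ preserves compact objects. As $A$ is compact in $\dua$, this forces $i^*(A) \in K^b(P_B)$, that is, $i^*(A)$ lies in the thick subcategory of $\dub$ generated by $B$. Applying the triangle functor $i_*$ then yields $i_*i^*(A) \in \mathrm{thick}_{\dua}(i_*(B))$, so it suffices to prove $i_*(B) \in \dba$.

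To that end, I compute the cohomology of $i_*(B)$ directly via adjunction:
\[
H^n(i_*(B)) = \Hom_A(A, i_*(B)[n]) = \Hom_B(i^*(A), B[n]).
\]
Representing $i^*(A)$ by a bounded complex of finitely generated projective $B$-modules, and using that $B$ is finite-dimensional over $k$, I can read off that the right-hand side is finite-dimensional for every $n$ and vanishes outside a bounded range of degrees. Hence $i_*(B)$ has bounded and finite-dimensional total cohomology, i.e.\ $i_*(B) \in \dba$; combined with the two reduction steps above, the lemma follows. The main point, and the only place where the full recollement structure is used beyond a bare adjunction, is the identity $i_* = i_!$: it is precisely this two-sided adjointability that forces $i^*$ to preserve compactness and thereby brings the otherwise mysterious cohomology of $i_*(B)$ under control.
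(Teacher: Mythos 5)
Your proof is correct, and it reorganises the paper's argument along a genuinely different route. Both proofs rest on the same two pillars --- that $i^*(A)$ is a compact object of $\dub$, hence lies in $K^b(P_B)$, and that $B$ is finite-dimensional over $k$ --- but they aim them at different targets. The paper attacks $i_*i^*(A)$ first: it uses the orthogonality $\Hom_A(j_!j^!(A),i_*i^*(A)[n])=0$ together with full faithfulness of $i_*$ to identify $H^n(i_*i^*(A))$ with the self-extension space $\Hom_B(i^*(A),i^*(A)[n])$, then runs a d\'evissage over the generator $i^*(A)$ of $K^b(P_B)$ to bound these spaces, and only at the very end recovers $i_*(B)$ via the fact that $i_*(B)$ and $i_*i^*(A)$ generate each other in finitely many steps. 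You attack $i_*(B)$ first, where the cohomology computation is a single adjunction, $H^n(i_*(B))\cong\Hom_B(i^*(A),B[n])$, requiring neither the orthogonality relation, nor full faithfulness of $i_*$, nor any d\'evissage; $i_*i^*(A)\in\dba$ then follows because it lies in the thick subcategory generated by $i_*(B)$ and $\dba$ is thick in $\dua$, and $j_!j^!(A)$ follows from the canonical triangle exactly as in the paper. Your explicit derivation of the compactness of $i^*(A)$ --- $i_*=i_!$ has a right adjoint, hence preserves coproducts, hence its left adjoint $i^*$ preserves compacts --- is the standard argument behind the paper's citation of [NZ, 4.3.6, 4.4.8] and is a welcome addition. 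The net gain of your route is economy: fewer recollement axioms are invoked and the finiteness estimate is more direct. The paper's route has the mild structural advantage that the isomorphism $H^n(i_*i^*(A))\cong\Hom_A(i_*i^*(A),i_*i^*(A)[n])$ established along the way is exactly what gets reused in the proof of Proposition~\ref{p:image-from-left}, so if you adopt your version you would need to record that isomorphism separately there.
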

\begin{proof} There is a canonical triangle
$$j_!j^!(A) \rightarrow A \rightarrow i_*i^*(A)\rightarrow
j_!j^!(A)[1],$$ which yields an isomorphism $\Hom_A(A,i_*i^*(A)[n])\cong
\Hom_A(i_*i^*(A),i_*i^*(A)[n])$ by applying $\Hom_A(-,i_*i^*(A)[n])$. Note that there is an isomorphism between the $n$-th cohomology space  $H^n(i_*i^*(A))$ of $i_*i^*(A)$ and the Hom-space $\Hom_A(A,i_*i^*(A)[n])$ and that $i_*$ is a full embedding. Hence $$H^n(i_*i^*(A))\cong \Hom_A(i_*i^*(A),i_*i^*(A)[n]) \cong \Hom_B(i^*(A),i^*(A)[n]).$$  By \cite[4.3.6, 4.4.8]{NZ}, $i^*(A)$ belongs to and generates $K^b(P_B)$ (i.e. $K^b(P_B)$ is the smallest triangulated subcategory of $\Dcal(\mathrm{Mod}$-$B)$ containing $i^*(B)$ and closed under taking direct summands). Since $B$ is a finite-dimensional algebra, it follows by d\'evissage that the space of self extensions $\Hom_B(i^*(A),i^*(A)[n])$ of $i^*(A)$ is finite-dimensional and vanishes for all but finitely many integers $n$. Combining this observation with the above isomorphism, we obtain that $i_*i^*(A)$ has finite-dimensional total cohomology space. Therefore it belongs to $\dba$. So do $j_!j^!(A)$, thanks to the canonical triangle, and as well as $i_*(B)$, because $i_*(B)$ and $i_*i^*(A)$ generate each other in finitely many steps.
\end{proof}


Derived simpleness of a finite-dimensional $k$-algebra $A$ was
introduced by Wiedemann \cite{W} (see also \cite{AKL2}). By
definition $A$ is said to be {\em derived simple} with respect to
$\dbmod$ respectively $\DMod$, if $A$ is nontrivial and there are no nontrivial recollements
of the form (R1) respectively (R2), namely, none of the full
embedding $i_*$, $j_!$ and $j_!$ is a triangle equivalence. We also
say that $A$ is $\dbmod$\emph{-simple} respectively
$\DMod$\emph{-simple} for short.

A $\Dcal(\mathrm{Mod})$-simple algebra is always
$\Dcal^b(\mathrm{mod})$-simple since, as mentioned above, a recollement
of the form (R1) always induces a recollement of the form (R2). The
converse is in general not true, an example can be found
in~\cite{AKLY01}.

An algebra is said to be {\em indecomposable} if it is not
isomorphic to a direct product of two nonzero algebras. Clearly, if
an algebra is decomposable, then a  nontrivial decomposition of the
algebra yields a nontrivial recollement. Hence a decomposable
algebra is never derived simple in any sense.

\bigskip

\section{The bounded case}

Let $A$ be a finite-dimensional $k$-algebra. Recall that $A$ is said
to be a {\em symmetric algebra}, if $DA$ is isomorphic to $A$ as
$A$-$A$-bimodules, where $D=\Hom_k(-,k)$ is the $k$-dual. In particular, an
$A$-module is projective if and only if it is injective. For
equivalent definitions of symmetric algebras see Curtis and Reiner
\cite{CR}. Group algebras of finite groups form an important class of symmetric algebras.

\begin{lemma}[\cite{Rickard02} Corollary 3.2]\label{l:0-cy}
Let $A$ be a symmetric finite-dimensional $k$-algebra. Then there is
a bifunctorial isomorphism
\[D\Hom_A(P,M)\cong\Hom_A(M,P)\] for $P\in K^b(P_A)$, and $M\in \mathcal{D}(\mathrm{mod}$-$A)$.
\end{lemma}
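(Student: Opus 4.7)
My plan is to construct a natural transformation
\[
\eta_{P,M}\colon D\Hom_A(P,M)\lra \Hom_A(M,P),
\]
bifunctorial in $P\in K^b(P_A)$ and $M\in\Dcal(\mathrm{mod}\text{-}A)$, verify that it is an isomorphism when $P=A$, and then extend to all of $K^b(P_A)$ by a thick-subcategory dévissage.

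For the base case $P=A$, the left-hand side is immediate: $\Hom_A(A,M)=H^0(M)$, so $D\Hom_A(A,M)=DH^0(M)$. For the right-hand side, the symmetry hypothesis supplies a bimodule isomorphism $A\cong DA$, and the standard derived Hom--tensor adjunction then produces natural isomorphisms
\[
\Hom_A(M,A)\;\cong\;\Hom_A(M,DA)\;=\;H^0\rhom_A(M,DA)\;\cong\;H^0 D(M\lten_A A)\;\cong\; DH^0(M),
\]
functorial in $M$. Composing these produces the desired natural isomorphism $\eta_{A,M}$; replacing $M$ by $M[-n]$ yields the isomorphism for every shift $P=A[n]$ as well.

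For arbitrary compact $P$, I consider the full subcategory $\Scal\subseteq K^b(P_A)$ of objects for which $\eta_{P,M}$ is an isomorphism for every $M$. For each fixed $M$, both $P\mapsto D\Hom_A(P,M)$ and $P\mapsto \Hom_A(M,P)$ are cohomological functors on $K^b(P_A)$, since $\Hom_{\Dcal}$ is cohomological in each variable and $D$ is exact. The naturality of $\eta$ in $P$ lets the five lemma close $\Scal$ under taking cones of distinguished triangles, and closure under shifts and direct summands is immediate. Hence $\Scal$ is a thick triangulated subcategory of $K^b(P_A)$ containing $A$, and since $A$ thick-generates $K^b(P_A)$, we conclude $\Scal=K^b(P_A)$.

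The main technical obstacle is to arrange the isomorphisms of the base case into a genuinely bifunctorial natural transformation $\eta$, so that the five-lemma step in the dévissage proceeds without a hitch. Once the naturality is in place, the remainder of the argument is formal, relying only on the standard generation fact for $K^b(P_A)$.
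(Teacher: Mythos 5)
The paper offers no proof of this lemma; it is quoted verbatim from Rickard (Corollary 3.2), so your attempt can only be judged on its own merits. Your outline is the standard one, and the base case $P=A$ is correct: $\Hom_A(M,A)\cong\Hom_A(M,DA)\cong H^0D(M\lten_A A)\cong DH^0(M)=D\Hom_A(A,M)$, naturally in $M$. But there is a genuine gap, and it is precisely the one you name in your closing paragraph without resolving: the d\'evissage presupposes a transformation $\eta_{P,M}$ already defined for \emph{every} $P\in K^b(P_A)$ and natural in $P$, because the five lemma is applied to the two long exact sequences coming from a triangle $P'\to P\to P''\to P'[1]$, and commutativity of the relevant squares \emph{is} naturality in $P$. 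D\'evissage can verify that a given natural transformation is invertible on the thick subcategory generated by $A$; it cannot manufacture the transformation from its value at $P=A$. As written, the argument therefore stops one step short.

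The gap is fixable in two standard ways. (i) Define $\eta$ globally by a trace pairing: a symmetrizing form $\lambda\colon A\to k$ induces for each $P\in K^b(P_A)$ a linear form $\Hom_A(P,P)\to k$, and the composition pairing $\Hom_A(M,P)\otimes_k\Hom_A(P,M)\to\Hom_A(P,P)\to k$ yields a map $\Hom_A(M,P)\to D\Hom_A(P,M)$ that is manifestly bifunctorial; your d\'evissage then runs exactly as stated. (ii) Observe that your chain of isomorphisms works for arbitrary $P$, not only $P=A$: since $A\cong DA$ as bimodules, one has $P\cong DA\otimes_A P\cong D\Hom_A(P,A)$ as complexes (using that the terms of $P$ are finitely generated projectives), whence $\Hom_A(M,P)\cong H^0D\bigl(M\lten_A\Hom_A(P,A)\bigr)\cong D\Hom_A(P,M)$, the last step being the tensor-evaluation isomorphism valid because $P$ is compact; this route makes the d\'evissage unnecessary. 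In either case you should also record why $\Hom_A(P,M)$ is finite-dimensional for $M\in\mathcal{D}(\mathrm{mod}\text{-}A)$, so that the $k$-dual behaves as intended: only finitely many cohomology modules of $M$ contribute to morphisms from the compact complex $P$, or into $P$ viewed as a bounded complex of injectives.
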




The main result of this section is the following.
\begin{thm} \label{maintheorem}
A finite-dimensional indecomposable symmetric $k$-algebra is $\dbmod$-simple. 
\end{thm}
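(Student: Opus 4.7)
The plan is to prove the contrapositive: any recollement of the form (R1) on $\dba$, with $A$ indecomposable symmetric, must be trivial. The guiding idea is to exploit the canonical triangle
\[
j_!j^!(A)\to A\to i_*i^*(A)\to j_!j^!(A)[1]
\]
together with Rickard's duality from Lemma \ref{l:0-cy} in order to promote the one-sided orthogonality $\Hom_A(j_!(-),i_*(-))=0$ inherent in a recollement to a two-sided perpendicularity, which in turn forces $A$ to decompose as a direct product of algebras; by indecomposability one factor must vanish, and this trivialises the recollement.

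The first task is to show that $j_!j^!(A)$ is a compact object of $\dba$, i.e.\ lies in $K^b(P_A)$. By \cite[Corollary 2.3]{AKL3} the recollement (R1) extends to one of the form (R2); in (R2) the functor $j^!=j^*$ admits both a left adjoint $j_!$ and a right adjoint $j_*$, and therefore preserves arbitrary coproducts, whence its left adjoint $j_!$ preserves compact objects. Combined with $j^!(A)\in K^b(P_C)$, established as in the argument for $i^*(A)\in K^b(P_B)$ given in the proof of Lemma \ref{l:image-from-left}, this yields $j_!j^!(A)\in K^b(P_A)$.

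Rickard's duality then gives, for every $n\in\Z$,
\[
\Hom_A\bigl(i_*i^*(A),\, j_!j^!(A)[n]\bigr)\;\cong\;D\Hom_A\bigl(j_!j^!(A)[n],\, i_*i^*(A)\bigr),
\]
whose right-hand side vanishes by the adjunction $(j_!,j^!)$ and the recollement axiom $j^*i_*=0$. Taking $n=1$, the canonical triangle splits, so $A\cong j_!j^!(A)\oplus i_*i^*(A)$ in $\dba$. Since $A$ has cohomology only in degree zero, both summands are quasi-isomorphic to modules concentrated in degree zero, and the decomposition descends to $A=e_1A\oplus e_2A$ as right $A$-modules with orthogonal idempotents $e_1+e_2=1$. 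Taking $n=0$ in the display above gives $\Hom_A(e_2A,e_1A)=0$, while $\Hom_A(e_1A,e_2A)=0$ comes directly from $j^*i_*=0$; translated into idempotents these read $e_1Ae_2=e_2Ae_1=0$, forcing $e_1$ and $e_2$ to be central. Hence $A\cong e_1A\times e_2A$ as $k$-algebras, and indecomposability of $A$ forces one of the idempotents, say $e_1$, to vanish, so $j^!(A)=0$. Since $A$ generates $\dba$ as a triangulated category, this propagates to $j^!=0$ on all of $\dba$, and the full-faithfulness of $j_!$ then collapses the recollement to a trivial one.

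I expect the first step to be the most delicate: it requires a careful passage between (R1) and its extension (R2) to secure the compactness of $j_!j^!(A)$. That compactness is precisely what allows Rickard's duality to convert the one-sided vanishing built into any recollement into the two-sided perpendicularity that, via centrality of $e_1,e_2$, feeds into the indecomposability argument.
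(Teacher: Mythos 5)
Your overall strategy coincides with the paper's: establish that $j_!j^!(A)$ is compact, apply Rickard's duality (Lemma~\ref{l:0-cy}) to upgrade the one-sided vanishing $\Hom_A(j_!j^!(A),i_*i^*(A)[n])=0$ to two-sided orthogonality, split the canonical triangle, and read off an algebra decomposition of $A=\End_A(A)$. The second half of your argument is correct (and slightly more detailed than the paper's). However, your justification of the first step --- the compactness of $j_!j^!(A)$ --- contains a genuine gap. You pass to the induced recollement (R2) and argue that $j_!$ preserves compact objects (true: $j^!$ has a right adjoint $j_*$, hence preserves coproducts) and that $j^!(A)\in K^b(P_C)$ ``as in the argument for $i^*(A)\in K^b(P_B)$''. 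The analogy fails: $i^*$ preserves compacts because its right adjoint $i_*$ preserves coproducts, which is automatic since $i_*$ is itself a left adjoint (of $i^!$). The corresponding statement for $j^*=j^!$ would require $j_*$ to preserve coproducts, which is not among the recollement axioms and fails in general; concretely, in an idempotent-type recollement $j^!(A)$ is $Ae$ viewed over $eAe$, which need not be perfect. A decisive sanity check: if your argument were valid it would apply verbatim to an arbitrary recollement of the form (R2), forcing $i_*i^*(A)$ (the cone of a map between compacts) to lie in $K^b(P_A)$ always --- but that is exactly the statement the paper can only prove in Section~4 under the extra hypothesis (\#), and the whole point of Proposition~\ref{p:image-from-left} is that it is not automatic. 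The compactness of $j_!j^!(A)$ does hold for recollements of type (R1), but for a reason that genuinely uses the boundedness of the recollement; the paper simply cites \cite[Corollary 2.3]{AKL3} for it, and your proof needs that citation (or a proof of it) rather than the adjoint-functor argument.

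Two smaller points. First, your closing step asserts that ``$A$ generates $\dba$ as a triangulated category''; this is false for a non-semisimple symmetric algebra, since $\mathrm{thick}(A)=K^b(P_A)\subsetneq\dba$ when $A$ has infinite global dimension. The triviality of the recollement from $j^!(A)=0$ should instead be deduced via the adjunction $\Hom_C(j^!(A),Y[n])\cong\Hom_A(A,j_*(Y)[n])\cong H^n(j_*(Y))$, which forces $j_*(Y)=0$ and hence $Y=0$ for every $Y$, so that $\mathcal{D}^b(\mathrm{mod}$-$C)=0$. Second, the paper stops at the algebra decomposition $A\cong\End_A(j_!j^!(A))\times\End_A(i_*i^*(A))$ and invokes indecomposability directly; your extra discussion of central idempotents is a correct elaboration of the same point, not a different argument.
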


\begin{proof}
Let $A$ be a finite-dimensional symmetric $k$-algebra and assume
that there exists a nontrivial recollement
of the form (R1). 
By \cite[Corollary 2.3]{AKL3}, $j_!j^!(A)$ and $i_*i^*(A)$  belong
to $K^b(P_A)$.
Now apply Lemma~\ref{l:0-cy} to $P=j_!j^!(A)$ and $M=i_*i^*(A)[n]$
($n\in\Z$). We obtain
$$D\Hom_A(j_!j^!(A),i_*i^*(A)[n])\cong \Hom_A(i_*i^*(A)[n],j_!j^!(A))$$ for all integers $n$. The left hand side always vanishes as $j^*i_*=0$, and hence so does the right hand side. This means $i_*i^*(A)$ and $j_!j^!(A)$  are orthogonal to each other and the canonical triangle
$$j_!j^!(A) \ra A \ra i_*i^*(A) \ra j_!j^!(A)[1]$$ splits, i.e.
$A\cong j_!j^!(A) \oplus i_*i^*(A)$.
In particular, $A=\End_A(A)$ is decomposed as the product of
$\End_A(j_!j^!(A))$ and $\End_A(i_*i^*(A))$.
\end{proof}

\begin{rem} Lemma \ref{l:0-cy} says that the triangulated category $K^b(P_A)$ is $0$-Calabi--Yau. The above proof shows that this $0$-Calabi--Yau triangulated category is simple, in the sense that it admits no nontrivial recollements of triangulated categories. Notice that in the proof the Calabi--Yau dimension is not important. Thus with the same proof one shows that an indecomposable $d$-Calabi--Yau triangulated category is simple ($d\in\Z$)\footnote{We thank Bernhard Keller for pointing out this to us.}. In fact, it admits no nontrivial stable t-structures. Cluster categories of connected quivers are examples of indecomposable $2$-Calabi-Yau categories, and hence are simple.
\end{rem}

As a corollary, we establish a derived Jordan--H\"older theorem for
symmetric algebras, which is on the existence and uniqueness of
finite stratifications. Roughly speaking, a \emph{stratification} is
a way of breaking up a given derived category into simple pieces
using recollements. More rigorously, a stratification is a full
rooted binary tree whose root is the given derived category, whose
nodes are derived categories and whose leaves are simple (they are
called the \emph{simple factors} of the stratification) such that a
node is a recollement of its two child nodes unless it is a leaf.
For a finite-dimensional algebra, a \emph{block} is an
indecomposable algebra direct summand. The number of blocks is a
derived invariant.

\begin{cor}\label{c:jh-db} Let $A$ be a finite-dimensional symmetric algebra. Then any stratification of $\mathcal{D}^b(\mathrm{mod}$-$A)$ is finite. Moreover, the simple
factors of any two stratifications are the same: they are precisely the bounded derived categories of the blocks of $A$.
\end{cor}

\begin{proof} Suppose the algebra $A$ has $s$ blocks $A_i$ with $A=\bigoplus_{i=1}^s A_i$. Suppose a recollement of $\Dcal^b(\mathrm{mod}$-$A)$ of the form (R1) is given. The block decomposition of $A$ yields a decomposition of its derived category: $\Dcal^b(\mathrm{mod}$-$A)=\bigoplus_i \Dcal^b(\mathrm{mod}$-$A_i)$. In particular, $j_!(C)$ is a direct sum $\bigoplus_i X_i$ of $X_i\in\Dcal^b(\mathrm{mod}$-$A_i)$, and hence $C$, being isomorphic to $\End_A(j_!(C))$, admits a block decomposition $C=\bigoplus_i C_i$ such that $j_!(C_i)=X_i$. Similarly, the algebra $B$ admits a block decomposition $B=\bigoplus_i B_i$ such that $i_*(B_i)\in\mathcal{D}^b(\mathrm{mod}$-$A_i)$.  Fix an $i=1,\ldots,s$. For an indecomposable object $M\in\Dcal^b(\mathrm{mod}$-$C_i)$, considered as an object in $\Dcal^b(\mathrm{mod}$-$C)$, there exists an $n\in\mathbb{Z}$ such that $\Hom_{C}(C_i,M[n])\neq 0$. Since $j_!$ is fully faithful, it follows that $\Hom_A(X_i,j_!(M)[n])\neq 0$, implying that $j_!(M)\in\Dcal^b(\mathrm{mod}$-$A_i)$. Therefore, $j_!$ restricts to a triangle functor $j_!:\Dcal^b(\mathrm{mod}$-$C_i)\rightarrow \Dcal^b(\mathrm{mod}$-$A_i)$. Similarly, $i_*$ restricts to a triangle functor $i_*:\Dcal^b(\mathrm{mod}$-$B_i)\rightarrow \Dcal^b(\mathrm{mod}$-$A_i)$. For an object $N\in\Dcal^b(\mathrm{mod}$-$A_i)$, we have $\Hom_{C}(C_j,j^*(N))=\Hom_A(j_!(C_j),N)=\Hom_A(X_j,N)=0$ for $j\neq i$, implying that $j^*(N)\in\Dcal^b(\mathrm{mod}$-$C_i)$. Therefore, $j^*$ restricts to a triangle functor $j^*:\Dcal^b(\mathrm{mod}$-$A_i)\rightarrow \Dcal^b(\mathrm{mod}$-$C_i)$. Similarly, $j_*$, $i^*$ and $i^!$ respectively restricts to triangle functors $j_*:\Dcal^b(\mathrm{mod}$-$C_i)\rightarrow \Dcal^b(\mathrm{mod}$-$A_i)$, $i^*:\Dcal^b(\mathrm{mod}$-$A_i)\rightarrow \Dcal^b(\mathrm{mod}$-$B_i)$ and $i^!:\Dcal^b(\mathrm{mod}$-$A_i)\rightarrow \Dcal^b(\mathrm{mod}$-$B_i)$. It follows that the diagram below is a recollement
 $$\xymatrix@!=6.5pc{\dbbi \ar[r]|{i_*=i_!} &\dbai \ar@<+2.5ex>[l]|{i^!}
\ar@<-2.5ex>[l]|{i^*} \ar[r]|{j^!=j^*} & \dbci\ar@<+2.5ex>[l]|{j_*}
\ar@<-2.5ex>[l]|{j_!} .}$$
By Theorem \ref{maintheorem}, the blocks $A_i$ are derived simple. Therefore for each $i$, either $B_i=A_i$ and $C_i=0$ or $B_i=0$ and $C_i=A_i$, up to derived equivalence. In particular, up to derived equivalence, $B$ and $C$ are algebra direct summands of $A$. Therefore, the desired result follows by induction on the number $s$ of blocks of $A$.
\end{proof}

\bigskip

\section{The unbounded case}

In this section we partially generalise Theorem~\ref{maintheorem} to
the unbounded derived category.
Let $A$ be a finite-dimensional $k$-algebra. Consider the following
condition
\begin{itemize}
\item[(\#)] for any finitely generated non-projective $A$-module $M$, there are infinitely many integers
$n$ with $\Ext_A^n(M,M) \neq 0$.
\end{itemize}
We will prove that an indecomposable finite-dimensional symmetric algebra satisfying (\#) is $\mathcal{D}(\mathrm{Mod})$-simple.

Here is an example of a symmetric algebra which satisfies (\#). Let $A$ be the quotient of the path algebra of the quiver
$$\xymatrix{1\ar@<.7ex>[r]^{\alpha}&2\ar@<.7ex>[l]^{\beta}}$$ by the
ideal generated by $\alpha\beta\alpha$ and $\beta\alpha\beta$. Up to
isomorphism $A$ has four indecomposable non-projective modules given
by the following representations
\[\xymatrix{k\ar@<-.7ex>[r]&  0\ar@<-.7ex>[l]},~~ \xymatrix{0\ar@<-.7ex>[r]&k\ar@<-.7ex>[l]},
~~\xymatrix{k\ar@<-.7ex>[r]_{1}&k\ar@<-.7ex>[l]_{0}},
~~\xymatrix{k\ar@<-.7ex>[r]_{0}&k\ar@<-.7ex>[l]_{1}}.\] It is easy
to check that
\begin{eqnarray*}
\Ext_A^n(M,M)&=&\begin{cases}k & \text{if } n\geq 0 \text{ and }
n\equiv 0,3\hspace{-7pt}\pmod{4},\\ 0 & \text{otherwise},\end{cases}
\end{eqnarray*}
for each of the above modules $M$.

\medskip
More generally, the condition (\#) is satisfied by the following two classes of algebras, by~\cite[Proposition
3.2, Example 3.1]{S}
\begin{itemize}
\item[(1)] group algebras of finite groups over $k$;
\item[(2)] self-injective $k$-algebras of finite representation type.
\end{itemize}

\medskip

Recall that a finite-dimensional algebra is said to be
\emph{self-injective} if all projective modules are injective.

\begin{prop}\label{p:image-from-left}
Let $A$ be a finite-dimensional self-injective $k$-algebra
satisfying the condition (\#). Assume that $A$ admits a nontrivial
recollement of the form (R2). Then $i_*i^*(A)$ belongs to
$K^b(P_A)$.
\end{prop}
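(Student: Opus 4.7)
The plan is to argue by contradiction: suppose $X:=i_*i^*(A)$ is not in $K^b(P_A)$, and then exhibit a finitely generated non-projective $A$-module $M$ with $\Ext_A^n(M,M)=0$ for all sufficiently large $n$, contradicting condition (\#). By Lemma~\ref{l:image-from-left} we already know $X\in\dba$. A preliminary observation, already implicit in the proof of that lemma, is that $\Hom_A(X,X[n])=0$ for $|n|\gg 0$: applying $\Hom_A(-,X[n])$ to the triangle $j_!j^!(A)\to A\to X\to j_!j^!(A)[1]$ and using the orthogonality $\Hom_A(j_!(-),i_*(-))=0$ (a consequence of $j^!i_*=0$), one obtains $\Hom_A(X,X[n])\cong\Hom_A(A,X[n])=H^n(X)\cong\Hom_B(i^*(A),i^*(A)[n])$, which vanishes for $|n|\gg 0$ since $i^*(A)\in K^b(P_B)$ is compact.

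Now suppose $X\notin K^b(P_A)$. Take a minimal projective resolution $P^\bullet\to X$ by finitely generated projectives; it is bounded above but, by assumption, unbounded below. For $N$ sufficiently large, the stupid truncation $\sigma^{\leq -N}P^\bullet$ is quasi-isomorphic to $M[N]$, where $M$ is a finitely generated non-projective $A$-module, namely an $N$-th syzygy of $X$. The attendant short exact sequence of complexes yields a distinguished triangle in $\dba$
\[ M[N]\to X\to Q\to M[N+1], \]
with $Q=\sigma^{>-N}P^\bullet\in K^b(P_A)$.

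The key computation has two steps. First, applying $\Hom_A(-,X[n])$ to this triangle and using $\Hom_A(Q,X[n])=0$ for $|n|\gg 0$ (since $Q$ is compact and $X$ is bounded), one gets $\Hom_A(X,X[n])\cong\Hom_A(M,X[n-N])$ for $n$ large. Second, applying $\Hom_A(M,-)$ to the same triangle and invoking the self-injectivity of $A$---which forces $K^b(P_A)=K^b(I_A)$, so that $Q$ has finite injective dimension and hence $\Hom_A(M,Q[k])=\Ext_A^k(M,Q)=0$ for $k\gg 0$---one gets $\Hom_A(M,X[n-N])\cong\Hom_A(M,M[n])=\Ext_A^n(M,M)$ for $n$ large. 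Concatenating, $\Ext_A^n(M,M)\cong\Hom_A(X,X[n])=0$ for all $n\gg 0$, which contradicts (\#) because $M$ is finitely generated and non-projective. Hence $X\in K^b(P_A)$.

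The main obstacle is bridging the derived self-extensions $\Hom_A(X,X[n])$ with the ordinary $\Ext$ groups of a module. The syzygy triangle realises this bridge provided the two ``error terms'' $\Hom_A(Q,X[\cdot])$ and $\Hom_A(M,Q[\cdot])$ both vanish in high degrees; the former is automatic from compactness of $Q$ and boundedness of $X$, while the latter is precisely where the self-injectivity hypothesis on $A$ enters, guaranteeing that $Q$ has finite injective dimension.
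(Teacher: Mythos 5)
Your proposal is correct and follows essentially the same route as the paper: the paper also writes the object (after replacing it by a minimal right-bounded complex of projectives) as an extension of a bounded complex of projectives by a shifted syzygy module $M$, uses the two long exact sequences together with self-injectivity (so that the bounded piece is also a bounded complex of injectives) to identify $\Hom_A(X,X[n])$ with $\Ext^n_A(M,M)$ for $|n|\gg 0$, and concludes via (\#) and the vanishing of $\Hom_A(X,X[n])\cong H^n(X)$ from Lemma~\ref{l:image-from-left}. The only blemish is the orientation of your truncation triangle: $\sigma^{>-N}P^\bullet$ is the subcomplex and $\sigma^{\leq -N}P^\bullet\simeq M[N]$ the quotient, so the triangle should read $Q\to X\to M[N]\to Q[1]$, but both long exact sequences give the same isomorphisms either way, so the argument is unaffected.
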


\begin{proof} Let $A$ be a finite-dimensional $k$-algebra satisfying the condition
(\#). We claim that for any $X\in\dba$ we have either $X\in
K^b(P_A)$ or there are infinitely many integers $n$ such that
$\Hom_A(X,X[n])\neq 0$. By Lemma~\ref{l:image-from-left}, the object $i_*i^*(A)$ belongs to $\dba$, and hence $\Hom_A(i_*i^*(A),i_*i^*(A)[n])\cong H^n(i_*i^*(A))$ vanishes for all but finitely many $n\in\mathbb{Z}$. Therefore $i_*i^*(A)$ must be in $K^b(P_A)$.

\medskip

Next we prove the claim. Let $X\in \dba$. Without loss of
generality, we assume that $X$ is a minimal complex of finitely
generated projective $A$-modules which is bounded from the right.
Here minimality means that for any $p\in\mathbb{Z}$ the image
of each differential $d^p:X^p\rightarrow X^{p+1}$ lies in the
radical of $X^{p+1}$. Since $X$ has bounded cohomology, there is an
integer $N$ such $H^n(X)=0$ for $n\leq N$. Up to shift, we may
assume that $N=0$. Let $X''$ be the subcomplex of $X$ with
$(X'')^n=X^n$ for $n> 0$ and $(X'')^n=0$ for $n\leq 0$ and let $X'$
be the corresponding quotient complex. Then there is a triangle
\[X''\rightarrow X\rightarrow X'\rightarrow X''[1].\]
Note that $X''\in K^b(P_A)$, and $X'$ has cohomology concentrated in
degree $0$, and hence it is the minimal projective resolution of a
finitely generated $A$-module, say $M$.

Case 1: $M$ is projective. This implies that $X'$ is a stalk
complex, and hence $X\in K^b(P_A)$.

Case 2: $M$ is not projective. The above triangle gives us two long
exact sequences
\[\ldots\rightarrow \Hom_A(X,X''[n])\rightarrow \Hom_A(X,X[n])\rightarrow \Hom_A(X,X'[n])
\rightarrow\ldots\]
\[\ldots\rightarrow \Hom_A(X',X'[n])\rightarrow
\Hom_A(X,X'[n])\rightarrow \Hom_A(X'',X'[n]) \rightarrow\ldots\]
Recall that $X''\in K^b(P_A)$ is also a bounded complex of finitely generated injective modules. Therefore there exist only finitely
many integers $n$ such that $\Hom_A(X,X''[n])\neq 0$ (respectively,
$\Hom_A(X'',X'[n])\neq 0$). So from the above two long exact
sequences we see that
\[\Hom_A(X,X[n])\cong \Hom_A(X,X'[n])\cong \Hom_A(X',X'[n])\]
for all but finitely many integers $n$.
Now the claim follows from the condition (\#) since
$\Hom_A(X',X'[n])=\Ext_A^n(M,M)$.
\end{proof}

Now we are ready to prove the main results of this section.

\begin{thm}\label{t:derived-sim-Mod} Let $A$ be a finite-dimensional symmetric $k$-algebra satisfying the condition (\#). If $A$ is indecomposable, then it
is $\Dcal(\mathrm{Mod})$-simple.
\end{thm}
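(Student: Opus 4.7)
The plan is to reduce the unbounded case to the same splitting argument used in Theorem~\ref{maintheorem}, once we know that both outer terms of the canonical triangle at $A$ lie in $K^b(P_A)$. Suppose for contradiction that $A$ admits a nontrivial recollement of the form (R2). Since a symmetric algebra is in particular self-injective, Proposition~\ref{p:image-from-left} applies and gives us that $i_*i^*(A)$ belongs to $K^b(P_A)$.

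Next, I would deduce the same conclusion for $j_!j^!(A)$. Indeed, we have the canonical triangle
\[ j_!j^!(A) \longrightarrow A \longrightarrow i_*i^*(A) \longrightarrow j_!j^!(A)[1] \]
in $\dua$. Since $A \in K^b(P_A)$ and $i_*i^*(A) \in K^b(P_A)$ by the previous step, and $K^b(P_A)$ is a triangulated subcategory of $\dua$, it follows that $j_!j^!(A)$ also lies in $K^b(P_A)$.

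Now I can replay the argument of Theorem~\ref{maintheorem}: apply Lemma~\ref{l:0-cy} with $P = j_!j^!(A)$ (which is in $K^b(P_A)$) and $M = i_*i^*(A)[n]$ (which lies in $\dba$, since $K^b(P_A) \subseteq \dba$) to obtain, for every integer $n$,
\[ D\Hom_A(j_!j^!(A),\, i_*i^*(A)[n]) \;\cong\; \Hom_A(i_*i^*(A)[n],\, j_!j^!(A)). \]
The left-hand side vanishes because, by the $(j_!, j^!)$-adjunction, it is controlled by $j^! i_* = 0$ (equivalently, $j^* i_* = 0$). Hence the right-hand side vanishes too, so the canonical triangle splits, giving $A \cong j_!j^!(A) \oplus i_*i^*(A)$. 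Taking endomorphism rings produces a decomposition of $A$ as a direct product of two $k$-algebras, which by indecomposability of $A$ forces one of the summands to be zero, contradicting the nontriviality of the recollement.

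The main obstacle in this argument is already handled by Proposition~\ref{p:image-from-left}: getting $i_*i^*(A)$ into $K^b(P_A)$ in the unbounded setting requires the hypothesis (\#), whereas in the bounded setting the analogous statement was automatic by \cite[Corollary~2.3]{AKL3}. Once this placement is secured, the $0$-Calabi--Yau duality on $K^b(P_A)$ and the orthogonality coming from $j^* i_* = 0$ conspire exactly as before to split the recollement triangle at $A$ and decompose the algebra.
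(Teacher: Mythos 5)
Your proposal is correct and follows essentially the same route as the paper, which simply invokes Proposition~\ref{p:image-from-left} to place $i_*i^*(A)$ in $K^b(P_A)$ and then repeats the splitting argument of Theorem~\ref{maintheorem}. You usefully make explicit the small step the paper leaves implicit, namely that $j_!j^!(A)\in K^b(P_A)$ follows from the canonical triangle because $K^b(P_A)$ is a triangulated subcategory containing $A$ and $i_*i^*(A)$.
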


\begin{proof} Let $A$ be finite-dimensional symmetric satisfying the condition (\#). It follows from Proposition~\ref{p:image-from-left} that $i_*i^*(A)$
belongs to $K^b(P_A)$. Now we proceed as in the proof of
Theorem~\ref{maintheorem} to show that $A$ is decomposable.
\end{proof}


\begin{cor} The following two classes of finite-dimensional symmetric algebras are $\Dcal(\mathrm{Mod})$-simple:
\begin{enumerate}
\item blocks of group algebras of finite groups over $k$;
\item indecomposable symmetric $k$-algebras of finite representation type.
\end{enumerate}
\end{cor}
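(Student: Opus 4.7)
The plan is simply to reduce both parts of the corollary to Theorem~\ref{t:derived-sim-Mod} by verifying, in each case, the three hypotheses: indecomposability, the symmetric property, and the cohomological condition $(\#)$.

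First I would dispatch part (2). An indecomposable symmetric algebra is in particular indecomposable and self-injective, and the paragraph preceding Theorem~\ref{t:derived-sim-Mod} already records that self-injective $k$-algebras of finite representation type satisfy $(\#)$ (via \cite[Proposition 3.2, Example 3.1]{S}). So Theorem~\ref{t:derived-sim-Mod} applies verbatim.

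For part (1), let $G$ be a finite group and let $B=kGe$ be a block of $kG$, where $e$ is a primitive central idempotent. Indecomposability holds by the definition of a block. The symmetric property is inherited from $kG$: a symmetrizing bilinear form on $kG$ restricts to one on $B$, or equivalently $DB\cong D(kG)e\cong (kG)e=B$ as bimodules, so $B$ is again symmetric. It remains to check $(\#)$ for $B$. Here the key observation is that $\rfmod B$ is a full subcategory of $\rfmod{kG}$ (closed under extensions and direct summands), and for $M,N\in\rfmod B$ one has a natural isomorphism
\[\Ext^n_B(M,N)\cong \Ext^n_{kG}(M,N)\qquad\text{for all }n\ge 0,\]
since a projective resolution of $M$ over $B$ is also a projective resolution over $kG$. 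Moreover $M$ is projective as a $B$-module if and only if it is projective as a $kG$-module. Hence condition $(\#)$ for $kG$, which holds by \cite[Proposition 3.2]{S}, restricts to condition $(\#)$ for $B$. Theorem~\ref{t:derived-sim-Mod} then yields $\Dcal(\mathrm{Mod})$-simpleness of $B$.

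There is no real obstacle: the work has already been done in Theorem~\ref{t:derived-sim-Mod}, and the only substantive check is the routine transfer of symmetry and of $(\#)$ from $kG$ to an arbitrary block, which follows from the fact that a block is a two-sided algebra direct summand.
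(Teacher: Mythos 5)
Your proof is correct and follows essentially the same route as the paper: both parts are reduced to Theorem~\ref{t:derived-sim-Mod} by checking indecomposability, symmetry and condition $(\#)$, and your explicit transfer of symmetry and of $(\#)$ from $kG$ to its blocks (via $\Ext^n_B(M,N)\cong\Ext^n_{kG}(M,N)$ and the agreement of projectivity) carefully fills in a step the paper leaves implicit. The only divergence is in justifying $(\#)$ for $kG$ itself: you cite Schulz, as the paper also does earlier in the section, whereas the paper's proof additionally supplies a self-contained argument using the fact that for non-projective $M$ the image of the group cohomology ring $H^{\cdot}(G,k)$ in $\Ext^{\cdot}(M,M)$ has Krull dimension at least $1$ and is therefore infinite-dimensional; both justifications are valid.
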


\begin{proof} This is an immediate consequence of Theorem~\ref{t:derived-sim-Mod}. For completeness, we give a proof for the assertion that group algebras of finite groups satisfy the condition
(\#). Let $G$ be a finite group and let $A=kG$ be the group algebra.
For a finitely generated $A$-module $M$, define
$$\Ext^{\cdot}(M,M)=\begin{cases} \bigoplus_{n\geq
0}\Ext_A^{n}(M,M) & \text{if } \mathrm{char}(k)=2,\\
\bigoplus_{n\geq 0}\Ext_A^{2n}(M,M) & \text{otherwise.}\end{cases}$$
Let $k$ also denote the trivial module. Then the graded $k$-algebra
$H^\cdot(G,k)=\Ext^\cdot(k,k)$ is commutative Noetherian.  Let $M$
be a finitely generated $A$-module. The tensor product $-\ten_k M$
induces an algebra homomorphism
\[\varphi:H^{\cdot}(G,k)\rightarrow \Ext^\cdot(M,M).\]
It is known that if $M$ is non-projective, then the Krull dimension
of $H^{\cdot}(G,k)/\ker(\varphi)$ is greater than or equal to $1$
(see for example~\cite[Section 2.24, 2.25]{Benson84}), which implies
that it is infinite-dimensional. As a consequence, $\Ext^\cdot(M,M)$
is infinite-dimensional, implying the condition (\#).
\end{proof}

We have a $\Dcal(\mathrm{Mod})$-counterpart of Corollary~\ref{c:jh-db}: an unbounded derived Jordan--H\"older theorem for symmetric algebras satisfying the condition (\#).

\begin{cor}\label{c:jh-du} Let $A$ be a finite-dimensional symmetric algebra satisfying the condition (\#). Then any stratification of $\mathcal{D}(\mathrm{Mod}$-$A)$ is finite. Moreover, the simple
factors of any two stratifications are the same: they are precisely the derived categories of the blocks of $A$.
\end{cor}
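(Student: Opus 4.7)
The plan is to follow the template of the proof of Corollary~\ref{c:jh-db}, using Theorem~\ref{t:derived-sim-Mod} in place of Theorem~\ref{maintheorem}. The central step is to show that any recollement of $\dua$ of the form (R2) respects the block decomposition of $A$; induction on the number of blocks will then give both finiteness of stratifications and the identification of their simple factors. Write $A=\bigoplus_{i=1}^{s}A_i$ for the block decomposition, giving the orthogonal decomposition $\dua=\bigoplus_{i=1}^{s}\Dcal(\mathrm{Mod}\text{-}A_i)$. Applying it to $j_!(C)$ and to $i_*(B)$, and using that $j_!$ and $i_*$ are fully faithful so that $C\cong\End_A(j_!(C))$ and $B\cong\End_A(i_*(B))$, I obtain algebra decompositions $C=\bigoplus_{i}C_i$ and $B=\bigoplus_{i}B_i$ with $j_!(C_i)\in\Dcal(\mathrm{Mod}\text{-}A_i)$ and $i_*(B_i)\in\Dcal(\mathrm{Mod}\text{-}A_i)$.

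Next I would show that each of the six recollement functors respects these block decompositions. The key input is the mutual orthogonality of $\Dcal(\mathrm{Mod}\text{-}A_i)$ and $\Dcal(\mathrm{Mod}\text{-}A_j)$ for $i\neq j$, applied via the recollement adjunctions. For instance, for $N_j\in\Dcal(\mathrm{Mod}\text{-}A_j)$ and $k\neq j$ the identity
$$\Hom_C(C_k,j^*(N_j)[n])\cong\Hom_A(j_!(C_k),N_j[n])=0$$
implies that the cohomology of $j^*(N_j)$ is annihilated by the central idempotent $1-e_j\in C$ corresponding to the summand $\bigoplus_{k\neq j}C_k$, so $j^*(N_j)\in\Dcal(\mathrm{Mod}\text{-}C_j)$. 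Dually, for $M_i\in\Dcal(\mathrm{Mod}\text{-}C_i)$ and any $N_k\in\Dcal(\mathrm{Mod}\text{-}A_k)$ with $k\neq i$,
$$\Hom_A(j_!(M_i),N_k)\cong\Hom_C(M_i,j^*(N_k))=0,$$
and taking $N_k$ to be the $A_k$-component of $j_!(M_i)$ itself forces that component to vanish, so $j_!(M_i)\in\Dcal(\mathrm{Mod}\text{-}A_i)$. Analogous computations with the remaining adjunctions and with the identities $i_!=i_*$ and $j^!=j^*$ give the corresponding statements for $j_*$, $i^*$, and $i^!$.

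Once every functor restricts to each block, the given recollement splits as a direct sum of recollements of $\Dcal(\mathrm{Mod}\text{-}A_i)$ by $\Dcal(\mathrm{Mod}\text{-}B_i)$ and $\Dcal(\mathrm{Mod}\text{-}C_i)$, one for each $i$. Each block $A_i$ is symmetric and inherits condition~(\#), since for a finitely generated $A_i$-module $M$ one has $\Ext^n_{A_i}(M,M)=\Ext^n_A(M,M)$ and projectivity over $A_i$ coincides with projectivity over $A$. Theorem~\ref{t:derived-sim-Mod} then forces, for each $i$, either $B_i$ to be derived equivalent to $A_i$ with $C_i=0$, or the reverse. Thus $B$ and $C$ are, up to derived equivalence, complementary algebra direct summands of $A$, and induction on $s$ yields both finiteness and uniqueness of the simple factors, which are precisely the $\Dcal(\mathrm{Mod}\text{-}A_i)$.

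I expect the main obstacle to be the step establishing that the six recollement functors preserve the block decomposition. In the bounded case treated in Corollary~\ref{c:jh-db} this was accomplished using Krull--Schmidt together with the preservation of indecomposability under fully faithful functors, but $\dua$ lacks a comparable Krull--Schmidt property, so that approach cannot be transplanted directly. The orthogonality-via-adjunction argument sketched above is the natural substitute, but it must be carried out carefully and uniformly for each functor, with particular attention to the passage from vanishing of Hom-groups to genuine containment in a single block of $\dua$.
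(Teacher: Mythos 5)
Your proposal is correct and follows the route the paper intends: its own proof of this corollary is literally ``Similar to the proof for Corollary~\ref{c:jh-db}'', i.e.\ decompose along blocks, show the six functors restrict to each block, invoke Theorem~\ref{t:derived-sim-Mod} in place of Theorem~\ref{maintheorem}, and induct on the number of blocks. Your one substantive deviation --- replacing the bounded proof's use of indecomposable objects (which leans on Krull--Schmidt, unavailable in $\dua$) by the orthogonality-via-adjunction argument, establishing that $j^*$ restricts first and then deducing the same for $j_!$ by testing $j_!(M_i)$ against its own $A_k$-components --- is exactly the adjustment needed to turn ``similar'' into an actual proof.
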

\begin{proof} Similar to the proof for Corollary~\ref{c:jh-db}.
\end{proof}

\bigskip


\end{document}